\documentclass{amsart}
\usepackage{amsmath,amssymb,amsfonts}
\usepackage[mathscr]{eucal}

\usepackage{enumerate}

\theoremstyle{plain}
\newtheorem{theorem}{Theorem}[section]

\theoremstyle{definition}
\newtheorem{definition}[theorem]{Definition}
\newtheorem{example}[theorem]{Example}
\newtheorem*{thank}{Acknowledgments}

\numberwithin{equation}{section}

\input xy
\xyoption{all}

\newcommand{\Deltaop}{{\bf \Delta}^{op}}

\newcommand{\hocolim}{\text{hocolim}}
\newcommand{\holim}{\text{holim}}
\newcommand{\nerve}{\text{nerve}}

\newcommand{\we}{\text{we}}

\newcommand{\Hom}{\text{Hom}}
\newcommand{\Map}{\text{Map}}
\newcommand{\Aut}{\text{Aut}}

\newcommand{\Top}{\mathcal Top}
\newcommand{\SSets}{\mathcal{SS}ets}

\newcommand{\Sets}{\mathcal Sets}
\newcommand{\map}{\text{map}}
\newcommand{\ob}{\text{ob}}
\newcommand{\hoequiv}{\text{hoequiv}}
\newcommand{\css}{\mathcal{CSS}}

\begin{document}

\title[Homotopy limits]{Homotopy limits of model categories and more general homotopy theories}

\author[J.E. Bergner]{Julia E. Bergner}

\address{Department of Mathematics, University of California, Riverside, CA 92521}

\email{bergnerj@member.ams.org}

\date{\today}

\subjclass[2000]{Primary: 55U40; Secondary: 55U35, 18G55, 18G30,
18D20}

\keywords{model categories, complete Segal spaces, $(\infty,1)$-categories, homotopy theories, homotopy limits}

\thanks{The author was partially supported by NSF grant DMS-0805951.}

\begin{abstract}
Generalizing a definition of homotopy fiber products of model categories, we give a definition of the homotopy limit of a diagram of left Quillen functors between model categories.  As has been previously shown for homotopy fiber products, we prove that such a homotopy limit does in fact correspond to the usual homotopy limit, when we work in a more general model for homotopy theories in which they can be regarded as objects of a model category.
\end{abstract}

\maketitle

\section{Introduction}

Model categories, first defined by Quillen in \cite{quillen}, have long been a useful tool in understanding the homotopy theories of various mathematical structures.  As with the classical homotopy theory of spaces, having a model structure enables one to make various constructions, such as homotopy limits, unambiguously defined.

A more modern viewpoint might suggest regarding model categories themselves as objects of study.  In this way, one could seek to understand relationships between different model categories.  Left and right Quillen functors provide the correct kinds of maps, with Quillen equivalences our standard means of considering two model categories sufficiently alike. In this framework, one could ask questions such as what a homotopy limit or homotopy colimit of a diagram of model categories would be.  Unfortunately, there are no immediate answers to these questions because at present there is no known model structure on the category of model categories.  For the special case of the homotopy pullback of model categories, a construction has been used, for example by To\"en in \cite{toendha}.  In \cite{fiberprod} we show that it is in fact the ``correct" definition, by regarding model categories as particularly nice examples of a more general notion of ``homotopy theory."

We could, alternatively, consider a category with a specified class of weak
equivalences, or maps one would like to consider as equivalences
but which are not necessarily isomorphisms.  There may or
may not be a model structure on such a category, but the basic data of a ``homotopy theory" is present: some objects together with a means of calling two of them equivalent to one another.  This broader approach is useful in that one can use it to investigate a homotopy theory even if it does not have the more rigid structure of a model category.  While this flexibility comes at a cost, namely the lack of nice constructions arising from a model structure, we have the benefit that there is a model structure on the collection of all homotopy theories.  There are several choices of models for such homotopy theories; in this paper we work with the complete Segal space model due to the fact that it has the simplest criteria for identifying weak equivalences.

In \cite{fiberprod}, we proved that taking the homotopy pullback construction on a diagram of model categories and then passing to the world of complete Segal spaces gives a result weakly equivalent to the one we get if we translate the whole diagram into a diagram of complete Segal spaces and then take the usual homotopy pullback in this model category.  This fact thus confirms that the definition of homotopy pullback of model categories was the correct one.

In this paper, we address the more general question of defining a homotopy limit of a diagram of model categories and its analogue within the complete Segal space model structure.  We give a definition of such a homotopy limit of model categories, then establish its validity using the same approach that we did for the special case of homotopy pullbacks.

Using the idea that a homotopy limit is analogous to some kind of sheaf, the approaches of this paper can be compared to model topoi of Rezk \cite{rezktopos} and the more general $(\infty, 1)$-versions of topoi given by To\"en and Vezzosi \cite{tv} and Lurie \cite{lurie}.  A related construction of homotopy limits has also been given by Barwick \cite{barwick}.

This approach to homotopy limits of model categories lends itself to translating various results for spaces in classical homotopy theory and generalizing them to the model categorical level.  For example, in Section \ref{ex}, we define a Postnikov tower of categories of topological spaces with homotopy groups nonzero only through a given range, gradually increasing, and show that this tower converges to the category of topological spaces.

\begin{thank}
The author would like to thank Andrew Blumberg, Emmanuel Farjoun, and Philip Hackney for helpful conversations about this work.  We are also indebted to the anonymous referee for pointing out Example \ref{neweq} and for encouraging that the details of Section \ref{ex} be explained here.
\end{thank}

\section{Model categories and more general homotopy theories}

In this section we give a brief review of model categories and their relationship with the complete Segal space model for more general homotopy theories.

Recall that a \emph{model category} $\mathcal M$ is a category with three distinguished classes of morphisms: weak equivalences, fibrations, and cofibrations, satisfying five axioms \cite[3.3]{ds}.  An object $x$ in $\mathcal M$ is \emph{fibrant} if the unique map $x \rightarrow \ast$ to the terminal object is a fibration.  Dually, an object $x$ in $\mathcal M$ is \emph{cofibrant} if the unique map $\phi \rightarrow x$ from the initial object is a cofibration.

Given a model category $\mathcal M$, there is also a model structure on the category $\mathcal M^{[1]}$ whose objects of $\mathcal M^{[1]}$ are morphisms of $\mathcal M$, and whose morphisms are given by pairs of morphisms in $\mathcal M$ making the appropriate square diagram commute.  A morphism in $\mathcal M^{[1]}$ is a weak equivalence (or cofibration) if its component maps are weak equivalences (or cofibrations) in $\mathcal M$.  More generally, $\mathcal M^{[n]}$ is the category with objects strings of $n$ composable morphisms in $\mathcal M$; the model structure can be defined analogously.

One could also consider categories with weak equivalences and no additional structure, and then formally invert the weak equivalences.  This process does give a homotopy category, but it can have the weakness of having a proper class of morphisms between any two given objects.  If we are willing to accept such set-theoretic problems, then we can work in this situation; the advantage of a model structure is that it provides enough additional structure so that we can take homotopy classes of maps and hence avoid these difficulties.

To understand objects modeling such homotopy theories, we need the language of simplicial objects.  Recall that the simplicial indexing category $\Deltaop$ is defined to be the category with objects finite ordered sets $[n]=\{0 \rightarrow 1 \rightarrow \cdots \rightarrow n\}$ and morphisms the opposites of the order-preserving maps between them.  A \emph{simplicial set} is a functor
\[ K \colon \Deltaop \rightarrow \Sets. \]
We denote by $\SSets$ the category of simplicial sets, and this category has a natural model category structure equivalent to the standard model structure on topological spaces \cite[I.10]{gj}.

In this paper we also use \emph{simplicial spaces} (also called bisimplicial sets), or functors
\[ X \colon \Deltaop \rightarrow \SSets. \]
There are several model category structures on the category of bisimplicial sets.  An important one is the Reedy model structure \cite{reedy}, which is equivalent to the injective model structure, where the weak equivalences and cofibrations are defined levelwise \cite[15.8.7]{hirsch}.  Given a simplicial set $K$, we also denote by $K$ the constant simplicial space which has the simplicial set $K$ at every level.  We denote by $K^t$, or ``$K$-transposed", the constant simplicial space in the other direction, where $(K^t)_n = K_n$, where on the right-hand side $K_n$ is regarded as a discrete simplicial set.

A simplicial category, or category enriched over simplicial sets, models a homotopy theory, in a sense developed by Dwyer and Kan.  Using either of their two notions of simplicial localization, one can obtain from a category with weak equivalences a simplicial category \cite{dkfncxes}, \cite{dksimploc}, and there is a model structure $\mathcal{SC}$ on the category of all small simplicial categories \cite{simpcat}.  In this way, we obtain a ``homotopy theory of homotopy theories" in which a homotopy theory is modeled by a simplicial category.

One useful consequence of taking the simplicial category corresponding to a model category is that we can use it to describe \emph{homotopy function complexes}, or homotopy-invariant mapping spaces $\Map^h(x,y)$ between objects of a model category which is not necessarily equipped with the additional structure of a simplicial model category.  Of particular interest in this paper is the simplicial set $\Aut^h(x)$ of homotopy-invertible self-maps of an object $x$.

Rather than simplicial categories, we can consider complete Segal spaces, first introduced by Rezk \cite{rezk}, given by simplicial spaces satisfying conditions imposing a notion of composition up to homotopy.

\begin{definition} \cite[4.1]{rezk}
A \emph{Segal space} is a Reedy fibrant simplicial space $W$ such that the Segal maps
\[ \varphi_n \colon W_n \rightarrow \underbrace{W_1 \times_{W_0} \cdots \times_{W_0} W_1}_n \] are weak equivalences of simplicial sets for all $n \geq 2$.
\end{definition}

Given a Segal space $W$, we can consider its ``objects" $\ob(W)= W_{0,0}$, and, between any two objects $x$ and $y$, the ``mapping space" $\map_W(x,y)$, defined to be the homotopy fiber of the map $W_1 \rightarrow W_0 \times W_0$ given by the two face maps $W_1 \rightarrow W_0$.  The Segal condition given here tells us that a Segal space has a notion of $n$-fold composition of mapping spaces, up to homotopy.  Using this composition, we can define ``homotopy equivalences" in a natural way, and then speak of the subspace of $W_1$ whose components contain homotopy equivalences, denoted $W_{\hoequiv}$.  Notice that the degeneracy map $s_0 \colon W_0 \rightarrow W_1$ factors through $W_{\hoequiv}$.

\begin{definition} \cite[\S 6]{rezk}
A \emph{complete Segal space} is a Segal space $W$ such that the map $W_0 \rightarrow W_{\hoequiv}$ is a weak equivalence of simplicial sets.
\end{definition}

\begin{theorem} \cite[\S 7]{rezk}
There is a model category structure $\css$ on the category of simplicial spaces, obtained as a localization of the Reedy model structure such that:
\begin{enumerate}
\item the fibrant objects are the complete Segal spaces,

\item all objects are cofibrant, and

\item the weak equivalences between complete Segal spaces are levelwise weak equivalences of simplicial sets.
\end{enumerate}
\end{theorem}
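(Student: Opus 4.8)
The plan is to obtain $\css$ as a left Bousfield localization of the Reedy model structure on simplicial spaces. The first step is to record the hypotheses needed for such a localization to exist: the Reedy model structure on bisimplicial sets is left proper (every object is Reedy cofibrant, since the Reedy structure here agrees with the injective one, whose cofibrations are the levelwise monomorphisms) and cellular. Hence for any set $S$ of morphisms the left Bousfield localization $L_S$ exists, has the same cofibrations as the Reedy structure, and has as its fibrant objects exactly the Reedy fibrant $W$ that are $S$-local, i.e.\ for which $\Map^h(B,W)\to\Map^h(A,W)$ is a weak equivalence of simplicial sets for every $(A\to B)\in S$.

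Next I would choose the localizing set $S$, built from maps of simplicial sets regarded as maps of constant simplicial spaces. The first part of $S$ is the family of spine inclusions $G(n)\hookrightarrow\Delta[n]$ for $n\ge 2$, where $G(n)\subseteq\Delta[n]$ is the union of the edges $\{i,i+1\}$ for $0\le i<n$; these maps force the Segal condition. The second part is the single map $\Delta[0]\hookrightarrow E$, where $E$ is the nerve of the groupoid with two objects and a unique isomorphism between them; this map forces completeness. One then sets $\css=L_S$. (Equivalently, one may localize in two stages: first at the spine inclusions to get the Segal space model structure, then at $\Delta[0]\hookrightarrow E$.)

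The heart of the proof is identifying the fibrant objects. Part~(2) is immediate, since localization leaves the cofibrations unchanged and every object is already Reedy cofibrant. For part~(1) one computes homotopy function complexes out of the objects appearing in $S$. For Reedy fibrant $W$ one has $\Map^h(\Delta[n],W)\simeq W_n$ (on the nose $\Map(\Delta[n],W)=W_n$, and this computes $\Map^h$ since all objects are Reedy cofibrant), and, writing $G(n)$ as an iterated pushout of copies of $\Delta[1]$ along $\Delta[0]$'s — a homotopy pushout, since these are monomorphisms — one gets $\Map^h(G(n),W)\simeq W_1\times^h_{W_0}\cdots\times^h_{W_0}W_1$. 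Reedy fibrancy of $W$ makes the face map $W_1\to W_0$ a fibration, so the homotopy pullback is computed by the strict one, and the map $\Map^h(\Delta[n],W)\to\Map^h(G(n),W)$ is then identified with the Segal map $\varphi_n$; thus $W$ is local with respect to the spine inclusions precisely when it is a Segal space. A parallel but more delicate computation shows $\Map^h(E,W)\simeq W_{\hoequiv}$ for a Reedy fibrant Segal space $W$, so that, given the Segal condition, locality with respect to $\Delta[0]\hookrightarrow E$ is exactly the statement that $W_0\to W_{\hoequiv}$ is a weak equivalence, i.e.\ that $W$ is a complete Segal space. Finally, part~(3) is the general fact about left Bousfield localizations that a morphism between local (hence fibrant) objects is a local weak equivalence if and only if it is a weak equivalence in the unlocalized model structure; here the latter are the Reedy weak equivalences, which — the Reedy structure coinciding with the injective one — are exactly the levelwise weak equivalences of simplicial sets.

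I expect the main obstacle to be the identification $\Map^h(E,W)\simeq W_{\hoequiv}$ and the resulting verification that $E$-locality encodes completeness. This requires unwinding the definition of homotopy equivalences in a Segal space, presenting $E$ as a suitable homotopy colimit of constant simplicial spaces on standard simplices (so that mapping out of $E$ is computed by the matching homotopy limit), and carefully tracking which Reedy fibrancy hypotheses are needed to pass between homotopy and strict (co)limits. By contrast, the spine computation behind part~(1) is routine once $G(n)$ is written as an explicit iterated pushout.
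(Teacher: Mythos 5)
Your proposal is correct and is essentially the argument of the source the paper cites: the paper gives no proof of this theorem, deferring entirely to Rezk [\S 7], who constructs $\css$ exactly as you do --- as the left Bousfield localization of the (left proper, cellular) Reedy structure at the spine inclusions and at the inclusion of a point into $E$, identifying the local objects via the mapping-space computations you outline and deducing (2) and (3) from the general facts about left Bousfield localizations you quote. One caution on notation: for the identities $\Map(\Delta[n],W)\cong W_n$ and $\Map(E,W)\simeq W_{\hoequiv}$ to hold you must localize at the \emph{discrete} simplicial spaces $\Delta[n]^t$, $G(n)^t$, $E^t$ (Rezk's $F(n)$, $G(n)$, $E$; the paper's ``transposed'' convention), not at the levelwise-constant ones, so your phrase ``constant simplicial spaces'' conflicts with the paper's convention, under which unadorned $\Delta[n]$ denotes the levelwise-constant object and the on-the-nose computation fails.
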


The following theorem tells us that we can think of complete Segal spaces as models for homotopy theories.

\begin{theorem} \cite{thesis}
The model categories $\mathcal{SC}$ and $\css$ are Quillen equivalent.
\end{theorem}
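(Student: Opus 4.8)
The plan is to compare $\mathcal{SC}$ and $\css$ not directly but through an intermediate model for homotopy theories, the \emph{Segal categories}. A \emph{Segal precategory} is a simplicial space $X$ whose space of objects $X_0$ is a discrete simplicial set, and such an $X$ is a \emph{Segal category} if, after Reedy fibrant replacement, its Segal maps $\varphi_n$ are weak equivalences of simplicial sets. There is a model structure $\mathcal{SeCat}$ on Segal precategories whose fibrant objects are the (Reedy fibrant) Segal categories and whose weak equivalences are the Dwyer--Kan equivalences --- the maps that induce weak equivalences on all mapping spaces and that are essentially surjective on homotopy categories. The strategy is then to produce Quillen equivalences $\mathcal{SC} \simeq \mathcal{SeCat}$ and $\mathcal{SeCat} \simeq \css$ and to combine them; the resulting zigzag exhibits $\mathcal{SC}$ and $\css$ as Quillen equivalent.

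For the first equivalence, I would use the adjunction $F \colon \mathcal{SeCat} \rightleftarrows \mathcal{SC} \colon N$, where the nerve $N$ sends a simplicial category $\mathcal{C}$ with object set $O$ to the Segal precategory with $n$-th space $\coprod_{(x_0,\dots,x_n) \in O^{n+1}} \mathcal{C}(x_0,x_1) \times \cdots \times \mathcal{C}(x_{n-1},x_n)$, and $F$ is its left adjoint, a rigidification functor. Since $N\mathcal{C}$ has discrete $0$-space, its Segal maps are in fact isomorphisms, so $N\mathcal{C}$ is automatically a Segal category; more generally one checks that $N$ carries fibrations and trivial fibrations of simplicial categories to fibrations and trivial fibrations in $\mathcal{SeCat}$, so that $(F,N)$ is a Quillen pair. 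To see that it is a Quillen equivalence one needs $N$ to reflect weak equivalences between fibrant objects and the derived unit $X \to N(FX)$ to be a weak equivalence for every cofibrant $X$; the latter is the substantive point, amounting to the strictification theorem that every Segal category is equivalent to the nerve of an honest simplicial category.

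For the second equivalence, I would use the inclusion functor $I$ of Segal precategories into all simplicial spaces. Segal precategories form a full subcategory of simplicial spaces closed under colimits, so $I$ preserves colimits and hence has a right adjoint $R$, a ``discretization" which makes the $0$-space discrete. Since $I$ preserves monomorphisms, and therefore cofibrations into $\css$, it is a candidate for a left Quillen functor; one then verifies that it also preserves trivial cofibrations, so that $(I,R)$ is a Quillen pair with target $\css$, and that its derived unit and counit are weak equivalences. The delicate point here is completeness: a complete Segal space has a non-discrete $0$-space precisely because it records the space of self-equivalences of each object, while its underlying Segal category records the same homotopical information but with a discrete set of objects; one must check that moving back and forth along $I$ and $R$ --- inserting Reedy fibrant replacements where needed --- loses nothing up to weak equivalence, i.e.\ that a complete Segal space is determined up to equivalence by its underlying Segal category.

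I expect the main obstacle to be the strictification underlying the first Quillen equivalence: controlling the derived unit $X \to N(FX)$ requires understanding the rigidification $F$ on cofibrant Segal precategories well enough to see that replacing a homotopy-coherent composition law by a strictly associative one does not change the mapping spaces up to weak equivalence. By contrast, checking that the various functors are left or right Quillen, and the bookkeeping with Reedy fibrant replacements needed to match up the notions of weak equivalence on both sides, should be essentially routine --- though some care is required because the cofibrations of $\mathcal{SeCat}$ and those of $\css$ do not coincide on the nose.
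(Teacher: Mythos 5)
The paper itself gives no proof of this theorem; it is imported from the cited reference (Bergner, \emph{Three models for the homotopy theory of homotopy theories}), and your proposed route is precisely the one taken there: a zigzag of Quillen equivalences through Segal categories, with the rigidification/nerve adjunction on one side, the inclusion/discretization adjunction on the other, and the substantive inputs being the strictification of Segal categories and the comparison of a complete Segal space with its underlying Segal category. The one correction worth making is that a single model structure $\mathcal{SeCat}$ cannot serve both legs: the cofibrations needed to make the rigidification $F \colon \mathcal{SeCat} \rightarrow \mathcal{SC}$ left Quillen are of projective type and form a strictly smaller class than the monomorphism-type cofibrations needed to make the inclusion $I \colon \mathcal{SeCat} \rightarrow \css$ left Quillen. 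The actual proof therefore uses two model structures, $\mathcal{SeCat}_f$ and $\mathcal{SeCat}_c$, on the category of Segal precategories, with the same Dwyer--Kan weak equivalences but different cofibrations, linked by an identity-functor Quillen equivalence; your closing remark that the cofibrations do not coincide ``on the nose'' is pointing at exactly this issue, but the fix is an extra node in the zigzag rather than extra care within a single model structure. With that adjustment, your outline matches the reference's argument, including its identification of the strictification step (controlling the derived unit $X \rightarrow N(FX)$) as the genuinely hard point.
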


While not the one used for this Quillen equivalence, Rezk defines a functor which we denote $L_C$ from the category of model categories and left Quillen functors to the category of simplicial spaces; given a model category $\mathcal M$, we have that
\[ L_C(\mathcal M)_n = \nerve(\we(\mathcal M^{[n]})). \]  Here, $\mathcal M^{[n]}$ is defined as above, and $\we(\mathcal M^{[n]})$ denotes the subcategory of $\mathcal M^{[n]}$ whose morphisms are the weak equivalences.  While the resulting simplicial space is not in general Reedy fibrant, and hence not a complete Segal space, Rezk proves that taking a Reedy fibrant replacement is sufficient to obtain a complete Segal space \cite[8.3]{rezk}.  For the rest of this paper we assume that the functor $L_C$ includes composition with this Reedy fibrant replacement and therefore assigns a complete Segal space to any model category.  In fact, this construction can be applied to any category with weak equivalences.

Another problem with this definition is the fact that it is only a well-defined functor on the category whose objects are model categories and whose morphisms preserve weak equivalences.  We would prefer to have such a functor defined on the category of model categories with morphisms left Quillen functors.  To obtain such a functor, we consider $\mathcal M^c$, the full subcategory of $\mathcal M$ whose objects are cofibrant.  While $\mathcal M^c$ may no longer have the structure of a model category, it is still a category with weak equivalences.  Thus, we define
\[ L_C(\mathcal M)_n = \nerve(\we((\mathcal M^c)^{[n]})). \]  Each space in this new diagram is weakly equivalent to the one given by the previous definition, and now the construction is functorial on the category of model categories with morphisms the left Quillen functors.  If one wanted to consider right Quillen functors instead, we could take the full subcategory of fibrant objects, $\mathcal M^f$, rather than $\mathcal M^c$.

With these restrictions in place, the image of a model category under this functor is weakly equivalent to the one obtained by taking its simplicial localization and then applying the Quillen equivalence \cite[6.2]{css}.  Furthermore, this resulting complete Segal space can be characterized up to weak equivalence.

Before stating the characterization, we first need some notation.  Given a simplicial
monoid $M$, we can find a classifying complex
of $M$, a simplicial set whose geometric realization is the
classifying space $BM$. A precise construction can be made for
this classifying space by the $\overline W M$ construction
\cite[V.4.4]{gj}, \cite{may}.  We will simply write
$BM$ for the classifying complex of $M$.  Later in the paper, we consider disjoint unions of simplicial monoids; in this case the classifying complex is taken in the category of simplicial categories, rather than in simplicial monoids.

\begin{theorem} \cite[7.3]{css}  \label{baut}
Let $\mathcal M$ be a model category.  For $x$ an object of $\mathcal M$ denote by $\langle x \rangle$ the weak equivalence class of $x$ in $\mathcal M$, and denote by $Aut^h(x)$ the simplicial monoid of self weak equivalences of $x$.  Similarly, let $\langle \alpha \colon x \rightarrow y \rangle$ denote the weak equivalence class of $\alpha$ in $\mathcal M^{[1]}$ and $Aut^h(\alpha)$ its respective simplicial monoid of self weak equivalences.  Up to weak equivalence in the model category $\css$, the complete Segal space $L_C(\mathcal M)$ looks like
\[ \coprod_{\langle x \rangle} BAut^h(x) \Leftarrow \coprod_{\langle \alpha \colon x \rightarrow y \rangle} BAut^h(\alpha) \Lleftarrow \cdots. \]
\end{theorem}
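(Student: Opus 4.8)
The plan is to prove the statement one simplicial level at a time and then assemble it. Let $Y$ denote the simplicial space with $Y_n = \coprod_{\langle \sigma \rangle} B\Aut^h(\sigma)$, the coproduct taken over weak equivalence classes of strings $\sigma$ of $n$ composable morphisms of $\mathcal M$ (equivalently, objects of $\mathcal M^{[n]}$), with $\Aut^h(\sigma)$ the simplicial monoid of self weak equivalences of $\sigma$ in $\mathcal M^{[n]}$, and with face and degeneracy maps induced by the functors $\mathcal M^{[n]} \to \mathcal M^{[m]}$ attached to the maps $[m] \to [n]$ of $\Delta$. Since $\css$ is a left Bousfield localization of the Reedy model structure, whose weak equivalences are exactly the levelwise weak equivalences of simplicial sets, it suffices to produce a zigzag of maps of simplicial spaces connecting $L_C(\mathcal M)$ and $Y$ which is a levelwise weak equivalence of simplicial sets.

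For a fixed $n$, write $\mathcal C$ for the category $\mathcal M^{[n]}$ equipped with its levelwise weak equivalences; one may work with $\mathcal M^{[n]}$ in place of $(\mathcal M^c)^{[n]}$, as the inclusion is a weak equivalence of categories with weak equivalences and changes neither the set of weak equivalence classes nor, up to weak equivalence, the monoids $\Aut^h$, so that $L_C(\mathcal M)_n \simeq \nerve(\we \mathcal C)$. Two objects of $\mathcal C$ lie in the same path component of $\nerve(\we \mathcal C)$ exactly when they are joined by a zigzag of weak equivalences, i.e.\ when they represent the same weak equivalence class $\langle \sigma \rangle$, so $\nerve(\we \mathcal C)$ decomposes as the disjoint union, over these classes, of the classification spaces of the full subcategories-with-weak-equivalences of $\mathcal C$ on the objects in the class of $\sigma$. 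Each summand is the classification space of a connected category with weak equivalences (a single weak equivalence class), and the classical Dwyer--Kan computation identifies it with $\overline W \Aut^h(\sigma) = B\Aut^h(\sigma)$: the classification space is invariant under Dwyer--Kan equivalences, and the hammock localization of such a category is DK-equivalent to the one-object simplicial monoid $\Aut^h(\sigma)$, the subspace of homotopy-invertible components of the corresponding homotopy function complex, whose classification space is $\overline W \Aut^h(\sigma)$. This yields the desired weak equivalence at each level.

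The remaining point, which I expect to be the main obstacle, is coherence: upgrading these levelwise equivalences to an honest map (or zigzag) of simplicial spaces. The difficulty is that the component decomposition of $\nerve(\we \mathcal C)$ requires a choice of representative in each weak equivalence class, and the functors $\mathcal M^{[n]} \to \mathcal M^{[m]}$ do not respect such choices, so one does not obtain a strict map of simplicial spaces on the nose. I would handle this by passing through the simplicial localization: the classification diagram $L_C(-)$ is Dwyer--Kan invariant --- a consequence of the Quillen equivalence $\mathcal{SC} \simeq \css$ together with the identification, recalled above, of $L_C(\mathcal M)$ with the complete Segal space arising from the simplicial localization of $\mathcal M$ --- so the comparison with $Y$ may be carried out after replacing $\mathcal M$ (or $\mathcal M^c$) by a sufficiently rigid DK-equivalent simplicial category, chosen so that the levelwise decompositions become natural in the simplicial direction; alternatively one argues directly, keeping track of the coherence homotopies explicitly. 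Once a zigzag of maps of simplicial spaces has been produced that is a levelwise weak equivalence of simplicial sets, it is automatically a Reedy, hence a $\css$, weak equivalence, which is the assertion. The genuine content is thus this bookkeeping; the component decomposition is elementary and the identification of each component with $B\Aut^h(\sigma)$ is standard Dwyer--Kan input.
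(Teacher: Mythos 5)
The paper gives no proof of this statement---it is imported from \cite[7.3]{css} together with the remark that \S 6 of that paper translates the characterization from simplicial categories to model categories---and your sketch correctly reconstructs the substance of the cited argument: the path-component decomposition of $\nerve(\we(\mathcal M^{[n]}))$ over weak equivalence classes, followed by the Dwyer--Kan identification of each component's classification space with $\overline{W}\Aut^h(\sigma)$. The coherence issue you flag is real but immaterial for this paper's purposes, since (as is noted immediately after the theorem) the statement is only ever invoked as a levelwise description of homotopy types, not as a strict model of the simplicial space.
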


We should point out that the reference (Theorem 7.3 of \cite{css}) gives a characterization of the complete Segal space arising from a simplicial category, not from a model category.  However, the results of \S 6 of that same paper allow one to translate it to the theorem as stated here.


This characterization, together with the fact that weak equivalences between complete Segal spaces are levelwise weak equivalences of simplicial sets, enables us to compare complete Segal spaces with one another.

\section{Homotopy limits of model categories}

Let $\mathcal D$ be a small category.  A $\mathcal D$-shaped diagram $X$ is given by a collection $\mathcal M_\alpha$, one for each object $\alpha$ of $\mathcal D$, together with left Quillen functors $F_{\alpha, \beta}^\theta \colon \mathcal M_\alpha \rightarrow \mathcal M_\beta$ which are compatible with one another, in the sense that if $\theta \colon \alpha \rightarrow \beta$ and $\delta \colon \beta \rightarrow \gamma$ are composable maps in $\mathcal D$, then
\[ F_{\alpha, \gamma}^{\delta \theta}=F_{\beta, \gamma}^\delta \circ F_{\alpha, \beta}^\theta. \]
The superscript $\theta$ indexes different left Quillen functors between the same model categories, coming from distinct maps $\theta \colon \alpha \rightarrow \beta$.  More precisely, if one considers the (large) category $\mathcal {MC}$ of model categories with left Quillen functors between them (or some small subcategory of it), then we can just give such a describe by a functor $X \colon \mathcal D \rightarrow \mathcal {MC}$.

\begin{definition}
Let $X$ be a $\mathcal D$-shaped diagram of model categories.  Then a \emph{lax homotopy limit} for $X$, denoted $\mathcal L_\mathcal D X$, is given by a family $(x_\alpha, u^\theta_{\alpha, \beta})$ where $x_\alpha$ is an object of $\mathcal M_\alpha$ and $u^\theta_{\alpha, \beta} \colon F^\theta_{\alpha, \beta}(x_\alpha) \rightarrow x_\beta$ is a morphism in $\mathcal M_\beta$, satisfying the compatibility condition
\[ u_{\alpha, \gamma}^{\delta \theta}= u_{\beta, \gamma}^\delta \circ F_{\beta, \gamma}^\delta(u_{\alpha, \beta}^\theta). \]

The \emph{homotopy limit} for $X$, denoted $\mathcal Lim_\mathcal D X$, is the full subcategory of $\mathcal L_\mathcal DX$ whose objects satisfy the additional condition that all maps $u_{\alpha, \beta}^\theta$ are weak equivalences in their respective $\mathcal M_\beta$.
\end{definition}

Notice that $\mathcal L_\mathcal D X$ can be given the structure of a model category, where weak equivalences and cofibrations are given levelwise, assuming that all the model categories in the diagram are sufficiently nice, e.g., cofibrantly generated.  On the other hand, $\mathcal Lim_\mathcal DX$ does not have the structure of a model category, since the weak equivalence requirement is not preserved by general limits and colimits.

Throughout this paper we do not actually need the homotopy limit $\mathcal Lim_\mathcal D X$ to have a model structure.  While it is not entirely satisfactory to have the homotopy limit of model categories not itself a model category, for our purposes we can just as easily think of it as a more general kind of homotopy theory.  In some cases, we can find a localization of the more general model structure so that the fibrant-cofibrant objects do have the maps weak equivalences, but for any given example it is difficult to verify whether this process can be done.

We first assume that all model categories in question are \emph{combinatorial} in that they are cofibrantly generated and locally presentable \cite[2.1]{duggercomb}.

\begin{theorem}
Let $\mathcal L_\mathcal D X$ be the lax homotopy limit of a $\mathcal D$-diagram $X$ of combinatorial model categories $\mathcal M_\alpha$, and assume that $\mathcal L_\mathcal D X$ has the structure of a right proper model category.  Then there exists a right Bousfield localization of $\mathcal L_\mathcal D X$ whose cofibrant objects $(x_\alpha, u_{\alpha, \beta}^\theta)$ have all $x_\alpha$ cofibrant and all maps $u_{\alpha, \beta}^\theta$ weak equivalences in $\mathcal M_\beta$.
\end{theorem}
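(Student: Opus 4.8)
The plan is to realize the desired localization as a right Bousfield localization of $\mathcal L_\mathcal D X$ at a suitable set of objects, namely the ``homotopically constant'' diagrams. Recall that, by a theorem of Barwick (building on Hirschhorn), a right Bousfield localization of a right proper, cofibrantly generated model category with respect to a \emph{set} $K$ of objects exists, provided the underlying category is also left proper or the model category is combinatorial; since each $\mathcal M_\alpha$ is combinatorial and weak equivalences and cofibrations in $\mathcal L_\mathcal D X$ are levelwise, $\mathcal L_\mathcal D X$ is itself combinatorial, so together with the right properness hypothesis the machinery of right Bousfield localization applies. The cofibrant objects of the localization $R_K(\mathcal L_\mathcal D X)$ are then the $K$-colocal objects: those cofibrant $(x_\alpha, u_{\alpha,\beta}^\theta)$ for which the homotopy function complex $\Map^h\big((k_\alpha, v_{\alpha,\beta}^\theta),\, (x_\alpha, u_{\alpha,\beta}^\theta)\big)$ takes $K$-colocal equivalences in the first variable to weak equivalences of simplicial sets.

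The key step is choosing $K$ correctly. First I would fix, for each object $\alpha$ of $\mathcal D$, a set $G_\alpha$ of cofibrant objects of $\mathcal M_\alpha$ that detects weak equivalences (e.g. a set of homotopy generators, available because $\mathcal M_\alpha$ is combinatorial). For each $\alpha$ and each $g \in G_\alpha$, I would form the diagram $\iota_\alpha(g)$ in $\mathcal L_\mathcal D X$ which is ``freely generated'' by placing $g$ at the spot $\alpha$: concretely, at an object $\gamma$ of $\mathcal D$ it is $\coprod_{\theta \colon \alpha \to \gamma} F_{\alpha,\gamma}^\theta(g)$, with the structure maps $u$ the evident ones induced by composition in $\mathcal D$ and functoriality of the $F$'s. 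One checks this is a cofibrant object of $\mathcal L_\mathcal D X$ (a colimit of cofibrant objects built from cofibrant $g$ along the left Quillen functors, which preserve cofibrancy), and that it corepresents, up to homotopy, evaluation at $\alpha$ composed with $\Map^h(g,-)$. Let $K$ be the set of all these $\iota_\alpha(g)$. Then a straightforward adjunction/Yoneda argument shows that a cofibrant object $(x_\alpha, u_{\alpha,\beta}^\theta)$ is $K$-colocal if and only if, for every $\theta\colon \alpha\to\beta$, the induced map $F_{\alpha,\beta}^\theta(x_\alpha)\to x_\beta$ is a weak equivalence in $\mathcal M_\beta$ — because colocality against $\iota_\alpha(g)$ translates into $\Map^h_{\mathcal M_\alpha}\!\big(g, R(x_\alpha)\big)\to \Map^h_{\mathcal M_\beta}\!\big(F_{\alpha,\beta}^\theta g, R(x_\beta)\big)$ being an equivalence for all $g$, and the $G_\alpha$ detect weak equivalences.

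The main obstacle I expect is the bookkeeping in the last equivalence: showing precisely that colocality with respect to the generating set $K$ is equivalent to the \emph{levelwise} weak-equivalence condition on all the structure maps $u_{\alpha,\beta}^\theta$, and not merely to some weaker derived-adjunction condition. This requires care in identifying the derived mapping space out of $\iota_\alpha(g)$ in $\mathcal L_\mathcal D X$ with a mapping space computed in a single $\mathcal M_\beta$ — one must use that $F_{\alpha,\beta}^\theta$ is a left Quillen functor so its total left derived functor is compatible with homotopy function complexes, and that a coproduct over the (finite or infinite) hom-set $\mathcal D(\alpha,\gamma)$ behaves well homotopically, which is where right properness and the combinatorial hypotheses are silently used. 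Once that identification is in place, the statement that every $K$-colocal cofibrant object has all $x_\alpha$ cofibrant is automatic (cofibrant objects of the localization are cofibrant in $\mathcal L_\mathcal D X$, hence levelwise cofibrant), and we are done.
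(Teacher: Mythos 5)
Your choice of colocalizing set $K$ rests on a mischaracterization of right Bousfield localization, and the localization you construct is not the one the theorem asks for. In a right Bousfield localization at a set $K$ of objects, the cofibrant objects are the $K$-colocal objects, and by \cite[5.1.5, 5.1.6]{hirsch} these form the smallest class of cofibrant objects containing (cofibrant approximations to) the elements of $K$ and closed under homotopy colimits and weak equivalences; they are \emph{not} cut out by a mapping-space condition with elements of $K$ in the source. (You appear to be importing the characterization of \emph{local} objects in a \emph{left} Bousfield localization, or conflating colocal objects with colocal equivalences.) In particular, every element of your $K$ is automatically colocal, hence cofibrant in your localization. But your generators $\iota_\alpha(g)$, with $\iota_\alpha(g)_\gamma = \coprod_{\theta\colon\alpha\to\gamma}F^\theta_{\alpha,\gamma}(g)$, do not satisfy the conclusion of the theorem: already for $\mathcal D = (d_1 \rightarrow d_2)$ and $\alpha = d_2$, the component at $d_1$ is the initial object and the structure map out of its image is not a weak equivalence. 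Worse, since $\iota_\alpha$ is left adjoint to evaluation at $\alpha$ and the sets $G_\alpha$ detect weak equivalences, your adjunction computation shows that the $K$-colocal equivalences are exactly the levelwise weak equivalences, so your right Bousfield localization is the identity localization and its cofibrant objects are \emph{all} cofibrant objects of $\mathcal L_\mathcal D X$, not just those with the $u^\theta_{\alpha,\beta}$ weak equivalences.

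The paper's proof goes the opposite way: the colocalizing set consists of objects that \emph{already} satisfy the desired condition. Using local presentability, one chooses for each $\alpha$ a set $\mathcal A_\alpha$ of cofibrant objects generating $\mathcal M_\alpha$ under $\lambda$-filtered colimits, iteratively enlarges these sets so that each $F^\theta_{\alpha,\beta}$ carries generators to objects weakly equivalent to generators (taking a colimit $\mathcal B_\alpha$ of the resulting chain $\mathcal A_\alpha \rightarrow \mathcal A^{(1)}_\alpha \rightarrow \cdots$), and takes $K$ to be the set of tuples $(x_\alpha, u^\theta_{\alpha,\beta})$ with each $x_\alpha \in \mathcal B_\alpha$ and all $u^\theta_{\alpha,\beta}$ weak equivalences. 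One then checks that every object of the homotopy limit is a filtered colimit of such tuples, and that the condition ``all $u^\theta_{\alpha,\beta}$ are weak equivalences'' is preserved by homotopy colimits of cofibrant objects \cite[19.4.2]{hirsch}, so the colocal closure of $K$ stays inside the desired class. To salvage your argument you would need to replace the corepresenting objects $\iota_\alpha(g)$ by generators of this kind.
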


\begin{proof}
Since the model categories $\mathcal M_\alpha$ are combinatorial, in particularly locally presentable, we can find, for each $\alpha$ a set $\mathcal A_\alpha$ of objects of $\mathcal M_\alpha$ which generates all of $\mathcal M_\alpha$ by $\lambda$-filtered colimits for some sufficiently large regular cardinal $\lambda$ \cite[2.2]{duggercomb}.  Further assume that all objects of each $\mathcal A_\alpha$ are cofibrant in $\mathcal M_\alpha$, a condition which can always be satisfied by passing to a presentation for $\mathcal M_\alpha$ as given by Dugger \cite{dugger}.

Given $a_\alpha \in \mathcal A_\alpha$ and a left Quillen functor $F_{\alpha, \beta}^\theta$, consider the class of all objects $x_\beta$ of $\mathcal M_\beta$ equipped with weak equivalences
\[ F_{\alpha, \beta}^\theta \rightarrow x_\beta. \]  Choose one cofibrant representative $x_\beta$ of this set, in $\mathcal A_\beta$ if possible.  If $x_\beta \notin \mathcal A_\beta$, then we want to add it to our generating set for $\mathcal M_\beta$.

Repeating this process for all $\alpha$ and all $a_\alpha \in \mathcal A_\alpha$, we obtain potentially larger sets $\mathcal A^{(1)}_\alpha$ of cofibrant objects of $\mathcal M_\alpha$.

However, we do not know that the images under $F_{\alpha, \beta}^\theta$ of the elements $x_\alpha \in \mathcal A^{(1)}_\alpha$ which were not in $\mathcal A_\alpha$ have weakly equivalent objects in $\mathcal A^{(1)}_\beta$.  Thus, we repeat the process again to obtain sets $\mathcal A^{(2)}_\alpha$, and continuing, to get $\mathcal A^{(n)}_\alpha$.  Since we have inclusion maps
\[ \mathcal A_\alpha \rightarrow \mathcal A^{(1)}_\alpha \rightarrow \mathcal A^{(2)}_\alpha \rightarrow \cdots \] we take a colimit of this diagram to obtain a set $B_\alpha$ of cofibrant objects of $\mathcal M_\alpha$.

In $\mathcal L_\mathcal D X$, consider objects
\[ \{(x_\alpha, u_{\alpha, \beta}^\theta)_{\alpha, \beta, \theta} \mid x_\alpha \in \mathcal B_\alpha, U_{\alpha, \beta}^\theta \text{ weak equivalences in } \mathcal M_\beta\}. \]  We can obtain all objects of the homotopy limit as filtered colimits of this set, since filtered colimits preserves these weak equivalences \cite[7.3]{duggercomb}.  We take a right Bousfield localization of $\mathcal L_\mathcal D X$ with respect to this generating set and denote the resulting model category $\mathcal L$.

Recall that the class of cofibrant objects of $\mathcal L$ is the smallest class of cofibrant objects of $\mathcal L_\mathcal D X$ containing this set and closed under weak equivalences \cite[5.1.5, 5.1.6]{hirsch}.  Hence, to establish that this model structure is the one we want, we need to show that homotopy colimits of objects in our set still have the maps $u_{\alpha, \beta}^\theta$ weak equivalences.  But, any such homotopy colimit has the form
\[ (\hocolim_\gamma (x_\alpha)^\gamma, \hocolim_\gamma (u_{\alpha, \beta}^\theta)^\gamma) \] and each $\hocolim (x_\alpha)^\gamma$ is still cofibrant.  It follows then that each $\hocolim (u_{\alpha, \beta}^\theta)^\gamma$ is still a weak equivalence \cite[19.4.2]{hirsch}.
\end{proof}

Of course, the difficulty in using this theorem lies in the difficulty in establishing that the model category $\mathcal L_\mathcal D X$ is right proper.  We can weaken this condition, using a remark of Hirschhorn \cite[5.1.2]{hirsch} or use the structure, investigated by Barwick, which is retained after taking a right Bousfield localization of a model category which is not necessarily right proper \cite{barwick}.  In fact, Barwick considers a very similar construction to the one given here.  In practice, when the conditions of this theorem cannot be verified, we can still use the original levelwise model structure on $\mathcal L_\mathcal D X$ and simply restrict to the appropriate subcategory when we want to require $u$ and $v$ to be weak equivalences.

We now consider some simple examples of homotopy limits of model categories.

\begin{example}
Let $\mathcal D$ be the category $d_1 \rightarrow d_2$.  Then, a corresponding diagram of model categories has the form $F \colon \mathcal M_1 \rightarrow \mathcal M_2$.  The homotopy limit has objects $(x_1, x_2; u)$ where $x_i$ is an object of $\mathcal M_i$ for $i=1,2$, and $u \colon F(x_1) \rightarrow x_2$ is a weak equivalence in $\mathcal M_2$.  Thus, the homotopy limit is equivalent to the weak essential image of $\mathcal M_1$ in $\mathcal M_2$, i.e., the subcategory of $\mathcal M_2$ whose objects are weakly equivalent to objects in the image of $F$.
\end{example}

\begin{example}
We now consider an equalizer diagram $d_1 \rightrightarrows d_2$.  Our diagram of model categories then looks like
\[ \xymatrix@1{\mathcal M_1 \ar@<.5ex>[r]^{F_1} \ar@<-.5ex>[r]_{F_2} & \mathcal M_2.} \]  The homotopy limit has objects $(x_1, x_2; u_1, u_2)$ with $x_i$ an object of $\mathcal M_i$ for $i=1,2$ and $u_1, u_2$ weak equivalences
\[ \xymatrix@1{F_1(x_1) \ar[r]^-{u_1} & x_2 & F_2(x_1). \ar[l]_-{u_2}} \]  Thus, the equalizer of model categories is equivalent to the subcategory of $\mathcal M_2$ which is in the weak essential image of both $F_1$ and $F_2$.
\end{example}

\begin{example} \label{neweq}
Our definition of homotopy limit specializes to the definition of homotopy fiber product as given in \cite{fiberprod}.  Namely, given a diagram
\[ \xymatrix{& \mathcal M_2 \ar[d]^{F_2} \\
\mathcal M_1 \ar[r]^{F_1} & \mathcal M_3} \] its homotopy fiber product consists of 5-tuples $(x_1, x_2, x_3; u,v)$ with $x_i$ an object of $\mathcal M_i$ and $u$ and $v$ weak equivalences giving
\[ \xymatrix@1{F_1(x_1) \ar[r]^u & x_3 & F_2(x_2) \ar[l]_v. } \]

On particular example is given by applying this construction to the diagram
\[ \xymatrix{ & \mathcal M_1 \ar[d]^{(F_2,F_1)} \\
\mathcal M_1 \ar[r]^-{(F_1,F_2)} & \mathcal M_2 \times \mathcal M_2} \] giving an alternate model for the homotopy equalizer than the one given in the previous example.
\end{example}

\section{An application to a Postnikov tower of categories of topological spaces} \label{ex}

As an example of homotopy limits of model categories, we show in this section that the category of topological spaces is equivalent to the homotopy limit of model categories of topological spaces with nontrivial homotopy groups only below dimension $n$, with $n \geq 0$.

Consider the model category $\Top$ of topological spaces.  For each natural number $n$, define a left Bousfield localization of $\Top$, denoted $\Top_{\leq n}$, with respect to the set of maps
\[ \{S^k \rightarrow \ast \mid k>n\}. \]  The weak equivalences in $\Top_{\leq n}$ are the $n$-equivalences, or maps $X \rightarrow Y$ such that the induced maps
\[ \pi_i(X) \rightarrow \pi_i(Y) \] are weak homotopy equivalences for all $i \leq n$.  These model structures form a diagram of left Quillen functors
\[ \cdots \rightarrow \Top_{\leq 3} \rightarrow \Top_{\leq 2} \rightarrow \Top_{\leq 1} \rightarrow \Top_{\leq 0}. \]

Using the definition from the previous section, this diagram has homotopy limit consisting of objects
\[ \cdots \rightarrow X_3 \rightarrow X_2 \rightarrow X_1 \rightarrow X_0 \] in which each map $X_{n+1} \rightarrow X_n$ is an $n$-equivalence and morphisms given by $n$-equivalences $X_n \rightarrow Y_n$ for all $n \geq 0$ such that the resulting diagram commutes.

There is a functor $\Top \rightarrow \mathcal Lim_n \Top_{\leq n}$ where a space $X$ is sent to the constant sequence on $X$.  In the other direction, there is a functor sending a diagram of spaces
\[ \cdots \rightarrow X_3 \rightarrow X_2 \rightarrow X_1 \rightarrow X_0 \] to its homotopy limit $\holim_n X_n$ in $\Top$.

Starting with a space $X$, taking the constant diagram followed by its homotopy limit results in the space $X$.  Composing in the other direction, we need to prove that a diagram of spaces
\[ \cdots \rightarrow X_3 \rightarrow X_2 \rightarrow X_1 \rightarrow X_0 \] is equivalent to the constant diagram given by $\holim_n X_n$.  In other words, we want to show that the homotopy limit of this diagram is $n$-equivalent to the space $X_n$ for all $n \geq 0$.  But this fact follows after applying \cite[19.6.13]{hirsch}.

\section{Relationship with homotopy limits of complete Segal spaces}

Let $X$ be a $\mathcal D$-diagram of left Quillen functors between model categories, and let $\mathcal Lim_\mathcal D X$ be its homotopy limit.  Let $L_CX$ denote the diagram of complete Segal spaces obtained by applying the functor $L_C$ to $X$.  Then there exists a natural map $L_C \mathcal Lim_\mathcal D X \rightarrow \holim_\mathcal D L_C \mathcal M_\alpha$, where $\alpha$ ranges over the objects of the category $\mathcal D$, as follows.  Evaluation at $\alpha$ defines a functor $\mathcal Lim_\mathcal D \rightarrow \mathcal M_\alpha$ for every object $\alpha$ in $\mathcal D$, inducing a map of complete Segal spaces $L_C \rightarrow \mathcal Lim_\mathcal D \rightarrow L_C \mathcal M_\alpha$.  The universal property of limits gives the existence of a map
\[ L_C \mathcal Lim_\mathcal D \rightarrow \lim_\mathcal D L_C \mathcal M_\alpha. \]  Composing with the natural map
\[ \lim_\mathcal D L_C \mathcal M_\alpha \rightarrow \holim_\mathcal D L_C \mathcal M_\alpha \] \cite[19.2.10]{hirsch} gives the desired map.  Since we are working with complete Segal spaces, this map can be understood by levelwise calculations on simplicial sets.

\begin{theorem}
The map $L_C \mathcal Lim_\mathcal D X \rightarrow \holim_\mathcal D L_C \mathcal M_\alpha$ is a weak equivalence of complete Segal spaces.
\end{theorem}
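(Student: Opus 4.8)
The plan is to follow the strategy of \cite{fiberprod} for homotopy fiber products: reduce everything to a levelwise computation of simplicial sets, identify both sides as nerves of explicit categories, and then invoke a general strictification fact about homotopy limits of such nerves. First I would reduce to a statement about levels. Both $L_C \mathcal Lim_\mathcal D X$ and $\holim_\mathcal D L_C \mathcal M_\alpha$ are complete Segal spaces (the former because the $L_C$ construction includes a Reedy fibrant replacement and applies to any category with weak equivalences; the latter because a homotopy limit of fibrant objects is fibrant), so the map is a weak equivalence of complete Segal spaces if and only if it is a levelwise weak equivalence of simplicial sets. The right-hand side is computed levelwise: the cosimplicial (Bousfield--Kan) replacement of the Reedy fibrant diagram $\alpha \mapsto L_C\mathcal M_\alpha$ is built from products, cotensors by finite simplicial sets, and a totalization, all formed levelwise in simplicial spaces, so $(\holim_\mathcal D L_C\mathcal M_\alpha)_n \simeq \holim_\mathcal D (L_C\mathcal M_\alpha)_n = \holim_\mathcal D \nerve(\we(\mathcal M_\alpha^{[n]}))$. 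Thus it suffices to show that for each $n \geq 0$ the map
\[ L_C(\mathcal Lim_\mathcal D X)_n \longrightarrow \holim_\mathcal D \nerve(\we(\mathcal M_\alpha^{[n]})) \]
is a weak equivalence.

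Next I would unwind the source. An object of $(\mathcal Lim_\mathcal D X)^{[n]}$ is a compatible $\mathcal D$-indexed family consisting of an $n$-string $\sigma_\alpha$ in $\mathcal M_\alpha$ for each $\alpha$ together with weak equivalences $u^\theta_{\alpha,\beta} \colon F^\theta_{\alpha,\beta}(\sigma_\alpha) \to \sigma_\beta$ in $\mathcal M_\beta^{[n]}$ satisfying the cocycle condition, and a morphism of such families is a levelwise weak equivalence compatible with the $u$'s. Hence $L_C(\mathcal Lim_\mathcal D X)_n$ is the nerve of the category $\mathcal G_n$ of sections of the diagram $\alpha \mapsto \we(\mathcal M_\alpha^{[n]})$ whose structure maps are required to be weak equivalences. (I may replace $\mathcal Lim_\mathcal D X$ throughout by its full subcategory of objectwise-cofibrant families without changing either side up to weak equivalence, by functorial cofibrant replacement; this matches the convention in the definition of $L_C$.) Under this identification the map of the theorem is the natural map $\nerve(\mathcal G_n) \to \holim_\mathcal D \nerve(\we(\mathcal M_\alpha^{[n]}))$ induced by the evaluation functors, with structure homotopies supplied by the $u^\theta$: recall that $L_C$ sends a natural weak equivalence of functors to a homotopy of maps, so the $u^\theta$ assemble the evaluation maps into a homotopy-coherent cone.

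The heart of the argument is then the general assertion that for a $\mathcal D$-diagram $\alpha \mapsto \mathcal C_\alpha$ of categories with weak equivalences, with structure functors preserving weak equivalences, each of whose nerves $\nerve(\we \mathcal C_\alpha)$ has the homotopy type described in Theorem \ref{baut} (a disjoint union of classifying spaces $BAut^h$), the natural map $\nerve(\mathcal G) \to \holim_\mathcal D \nerve(\we\mathcal C_\alpha)$ from the nerve of the section category with weak-equivalence structure maps is a weak equivalence. I would prove this by replacing the diagram of simplicial sets $\alpha \mapsto \nerve(\we\mathcal C_\alpha)$ by a levelwise-equivalent diagram that is objectwise fibrant with fibrations for structure maps, so that its strict limit computes the homotopy limit, and then matching that strict limit with $\nerve(\mathcal G)$: imposing that each transition map $u^\theta$ be a weak equivalence is precisely the homotopically correct strictification, because the space of weak equivalences out of a fixed object into a varying target is contractible. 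Equivalently, the nerve of the full subcategory of $\we(\mathcal M^{[1]})$ on those objects that are themselves weak equivalences maps by either endpoint to $\nerve(\we\mathcal M)$ by a weak equivalence, so that this "weak-equivalence leg" functions as a path object; this one-leg statement, together with Theorem \ref{baut} guaranteeing that each $\nerve(\we\mathcal M_\alpha^{[n]})$ behaves like a space, is exactly what makes the general case go through, and the case of a cospan is already in \cite{fiberprod}. Alternatively one could bootstrap: products are trivial (both sides are products), cospans are done in \cite{fiberprod}, Example \ref{neweq} reduces homotopy equalizers to homotopy fiber products, these give all finite $\mathcal D$, and the general case follows from the cosimplicial replacement of the homotopy limit together with compatibility of $L_C$ with filtered colimits.

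The main obstacle is this last step -- upgrading the mere existence of the comparison map to its being an equivalence. The diagrams $L_C\mathcal M_\alpha$ are Reedy fibrant, but the left Quillen functors $F^\theta_{\alpha,\beta}$ need not induce fibrations of simplicial spaces, so a replacement really is needed, and one must keep track of the $\mathcal D$-indexed families and the simplicial ($n$-string) direction at the same time. Once the one-leg path-object fact from \cite{fiberprod} and the characterization of Theorem \ref{baut} are in hand, however, these are bookkeeping matters rather than conceptual ones, and the argument should go through essentially verbatim from the homotopy fiber product case.
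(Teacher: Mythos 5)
Your reduction to a levelwise statement and your identification of $L_C(\mathcal Lim_\mathcal D X)_n$ with the nerve of a section category are both fine and agree with the paper's setup, but the heart of your argument has a genuine gap, and it is not the route the paper takes. You propose to replace the diagram $\alpha \mapsto \nerve(\we(\mathcal M_\alpha^{[n]}))$ by an objectwise fibrant diagram whose structure maps are fibrations ``so that its strict limit computes the homotopy limit.'' For a general indexing category $\mathcal D$ this is false: already for an equalizer diagram $a \rightrightarrows b$ with $a,b$ fibrant and both maps fibrations the strict equalizer need not be the homotopy equalizer (take both maps to be the identity of $S^1$: the strict equalizer is $S^1$ while the homotopy equalizer is the free loop space). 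Strict limits compute homotopy limits for injective fibrant diagrams, or for Reedy fibrant diagrams over inverse categories; cospans happen to fall into the latter case, which is exactly why the one-leg path-object trick closes the argument in \cite{fiberprod} but does not extend verbatim to arbitrary $\mathcal D$. Your fallback bootstrap (products plus homotopy fiber products generate all homotopy limits) has the symmetric problem on the other side of the comparison: you would need to exhibit $\mathcal Lim_\mathcal D X$ itself as a compatibly iterated homotopy fiber product of model categories, which is not established and is not how $\mathcal Lim_\mathcal D X$ is defined; and the passage to general $\mathcal D$ via cosimplicial replacement is a totalization, i.e.\ an inverse limit, so ``compatibility of $L_C$ with filtered colimits'' does not apply.

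The paper instead makes essential, not incidental, use of Theorem \ref{baut}: for connected $\mathcal D$ it rewrites $(\holim_\mathcal D L_C\mathcal M_\alpha)_0$ as $B\bigl(\coprod_{\langle x_\alpha\rangle}\holim_\alpha \Aut^h(x_\alpha)\bigr)$ by commuting the coproduct (connectedness of $\mathcal D$ is needed precisely here) and the classifying space functor $B$ past the homotopy limit, and then identifies $\holim_\alpha \Aut^h(x_\alpha)$ with the families $(a_\alpha)$ commuting with the $u^\theta_{\alpha,\beta}$ directly from the Bousfield--Kan formula $\holim(X)=\Hom(\mathcal D/-,X)$; the same is done in each higher simplicial degree, and disconnected $\mathcal D$ is handled at the end by splitting into components and using that $L_C$ preserves products. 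To salvage your strictification approach you would have to produce an honest injective fibrant replacement whose strict limit is $\nerve(\mathcal G_n)$, or else replace that step by an explicit computation of the homotopy limit such as the Bousfield--Kan one; as written, the key step does not go through.
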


To prove this theorem, we would like to be able to use Theorem \ref{baut} which characterizes the complete Segal spaces that result from applying the functor $L_C$ to a model category.  However, this theorem only gives the homotopy type of each space in the simplicial diagram, not an explicit description of the precise spaces we obtain.  We can justify using this description in terms of the homotopy type using an argument such as the one found in \cite[\S 4]{fiberprod} with still more details in \cite[\S 7]{css}, where Theorem \ref{baut} is proved.

\begin{proof}
We begin by considering the case where the diagram $X$ is connected.  Using our characterization of the homotopy type of the complete Segal space $L_C \mathcal M_\alpha$, we can assume that at level zero we get
\[ (\holim_\mathcal D L_C \mathcal M_\alpha)_0 \simeq \holim_\mathcal D \left( \coprod_{\langle x_\alpha \rangle} B \Aut^h(x_\alpha) \right). \]
Similarly, we can see that
\[ (L_C \mathcal Lim_\mathcal DX)_0 \simeq \coprod_{\langle (x_\alpha, u^\theta_{\alpha, \beta})_{\alpha, \beta, \theta} \rangle} B \Aut^h(x_\alpha, u^\theta_{\alpha, \beta}) \] which, using properties of the classifying space functor $B$, is equivalent to
\[ B\left( \coprod_{\langle (x_\alpha, u^\theta_{\alpha, \beta}) \rangle} \Aut^h(x_\alpha, u^\theta_{\alpha, \beta}) \right). \]

Let us first consider the simplicial monoids $\Aut^h(x_\alpha, u^\theta_{\alpha, \beta})$.  Its elements are families $(a_\alpha)_\alpha$ where each $a_\alpha \in \Aut^h(x_\alpha)$ such that each square diagram
\[ \xymatrix{F^\theta_{\alpha, \beta}(x_\alpha) \ar[r]^-{u^\theta_{\alpha, \beta}} \ar[d] _{F_{\alpha, \beta}^\theta(a_\alpha)} & x_\beta \ar[d]^{a_\beta} \\
F_{\alpha, \beta}^\theta(x_\alpha) \ar[r]^-{u^\theta_{\alpha, \beta}} & x_\beta} \] commutes.

We'd like to show that $(\holim_\alpha L_c \mathcal M_\alpha)_0$ is given by the same collection of commutative diagrams.  The process of taking coproducts commutes with homotopy limits, since we have assumed that $\mathcal D$ is connected, and the functor $B$ does as well \cite[19.4.5]{hirsch}, so we obtain equivalences
\[ \begin{aligned}
(\holim_\alpha L_c \mathcal M_\alpha)_0 & \simeq \holim_\alpha \left( \coprod_{\langle x_\alpha \rangle} B\Aut^h(x_\alpha) \right) \\
& \simeq B \left( \holim_\alpha \left( \coprod_{\langle x_\alpha \rangle} \Aut^h(x_\alpha) \right) \right) \\
& \simeq B \left( \coprod_{\langle x_\alpha \rangle}  \holim_\alpha \Aut^h(x_\alpha) \right).
\end{aligned} \]
Thus, it suffices to show that $\holim_\alpha \Aut^h(x_\alpha)$ consists of diagrams as given above.

This fact is more difficult than it was in the special case of homotopy pullbacks, since it is less common to think of them in this way.  However, using Bousfield and Kan's definition of the homotopy limit as $\holim(X) = \Hom(\mathcal D/-, X)$ for a diagram $X$ indexed by a category $\mathcal D$ \cite{bk}, one can check that the diagram above corresponds exactly to the one that we get by taking the category under $\mathcal D$ as used in this definition.

A similar argument can be used in dimension 1 (and subsequently in higher dimensions), using that
\[ (\holim_\mathcal D L_C \mathcal M_\alpha)_1 \simeq \holim_\mathcal D \left( \coprod_{\langle \alpha \colon x \rightarrow y \rangle} B\Aut^h(\alpha) \right) \] and
\[ (L_C \mathcal Lim_\mathcal D X)_1 \simeq \coprod_{\langle f \rangle} B\Aut^h(f) \] where
\[ f=(f_\alpha) \colon ((x_\alpha), (u_{\alpha, \beta}^\theta)) \rightarrow ((y_\alpha), (v_{\alpha, \beta}^\theta)) \] is a morphism in $\mathcal Lim_\mathcal DX$.

For the general case, where the diagram $\mathcal D$ has multiple connected components, we can write the map $L_C \lim_\mathcal D X \rightarrow \holim_\mathcal D L_C \mathcal M_\alpha$ as 
\[ L_C \left(\prod_{j \in J} \lim_{\mathcal D_j} X \right) \rightarrow \prod_{j \in J} \holim_{\mathcal D_j} L_C \mathcal M_\alpha \] where the set $J$ indexes the connected components $\mathcal D_j$ of $\mathcal D$.  Since the functor $L_C$ commutes with products, the result follows from the previous case.
\end{proof}

\section{Complete Segal spaces arising from the lax homotopy limit construction}

In \cite{fiberprod}, we give a concise description of the complete Segal space corresponding to the lax homotopy fiber product construction.  We would like to have an analogous description for more general homotopy limits, but we need a restriction to get one in the same way.  In particular, we need to assume that our diagram category $\mathcal D$ has a terminal object.  Recall that by $\nerve(\mathcal D)^t$ we denote the simplicial space whose space of $n$-simplices is the discrete space of $n$-simplices of the simplicial set $\nerve(\mathcal D)$.

\begin{theorem}
Let $\mathcal L_\mathcal D X$ denote the lax homotopy limit of a $\mathcal D$-shaped diagram $X$ of model categories, and suppose that $\mathcal D$ has a terminal object $\omega$.  Then the complete Segal space $L_C \mathcal L_\mathcal D X$ is weakly equivalent to the pullback in the diagram
\[ \xymatrix{L_C \mathcal L_\mathcal D X \ar[r] \ar[d] & L_C (\mathcal M_\omega)^{\nerve(\mathcal D)^t} \ar[d] \\
\prod_{\alpha \in \ob(\mathcal D)} L_C \mathcal M_\alpha \ar[r]^{\prod F_{\alpha, \omega}^\theta} & \prod_{\alpha \in \ob(\mathcal D)} L_C \mathcal M_\omega} \]  where in the bottom horizontal arrow $\theta$ is taken in each case to be the unique map in $\mathcal D$ from $\alpha$ to the terminal object $\omega$.
\end{theorem}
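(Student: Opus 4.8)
The plan is to adapt the argument used for the homotopy fiber product in \cite[\S 4]{fiberprod}: reduce everything to a levelwise computation of simplicial sets and then analyze it by means of Theorem \ref{baut}. First I would record the comparison map. Since $\mathcal D$ has a terminal object $\omega$, an object $(x_\alpha, u^\theta_{\alpha,\beta})$ of $\mathcal L_\mathcal D X$ determines both the family $(x_\alpha)$ in $\prod_\alpha \mathcal M_\alpha$ and the $\mathcal D$-diagram in $\mathcal M_\omega$ that sends $\alpha$ to $F^\theta_{\alpha,\omega}(x_\alpha)$ (with $\theta$ the unique map $\alpha \to \omega$) and a morphism $\theta\colon \alpha \to \beta$ to $F_{\beta,\omega}(u^\theta_{\alpha,\beta})$; the compatibility condition on the $u^\theta_{\alpha,\beta}$ is exactly what makes this second assignment functorial, using $F_{\beta,\omega}\circ F^\theta_{\alpha,\beta} = F_{\alpha,\omega}$. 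These two functors agree after composing into $\prod_\alpha \mathcal M_\omega$, so applying $L_C$ and using the universal property of the pullback produces a natural map from $L_C \mathcal L_\mathcal D X$ to the pullback of the square; the goal is to prove it is a weak equivalence of complete Segal spaces.

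Next I would pass to a levelwise statement. Since the $\css$ model structure is a localization of the right proper Reedy structure and a Reedy fibration between Reedy fibrant objects is a levelwise fibration, a homotopy pullback of complete Segal spaces is computed levelwise, and weak equivalences between complete Segal spaces are levelwise weak equivalences of simplicial sets; so it suffices to check that the square induces a homotopy pullback of simplicial sets at each level $n$. Moreover, level $n$ of $L_C$ of a category with weak equivalences $\mathcal N$ is level $0$ of $L_C(\mathcal N^{[n]})$, one has $(\mathcal L_\mathcal D X)^{[n]} \cong \mathcal L_\mathcal D(X^{[n]})$ for the diagram $X^{[n]}\colon \alpha \mapsto \mathcal M_\alpha^{[n]}$, and the analogous identities hold for $L_C(\mathcal M_\omega)^{\nerve(\mathcal D)^t}$ and for the products; hence it is enough to treat $n = 0$.

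At level $0$ I would invoke Theorem \ref{baut} to rewrite each corner as a disjoint union of classifying spaces: $(L_C \mathcal M_\alpha)_0 \simeq \coprod_{\langle x_\alpha\rangle} B\Aut^h(x_\alpha)$, the level $0$ space of $L_C(\mathcal M_\omega)^{\nerve(\mathcal D)^t}$ as a disjoint union of classifying spaces of homotopy automorphisms of $\mathcal D$-diagrams in $\mathcal M_\omega$, and $(L_C \mathcal L_\mathcal D X)_0 \simeq \coprod_{\langle (x_\alpha, u^\theta_{\alpha,\beta})\rangle} B\Aut^h(x_\alpha, u^\theta_{\alpha,\beta})$. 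As in \cite[\S 4]{fiberprod} and \cite[\S 7]{css}, although Theorem \ref{baut} only determines these spaces up to weak equivalence, the argument of those references shows this is enough for the comparison. One first checks that the comparison map is a bijection on components: a weak equivalence class of objects of $\mathcal L_\mathcal D X$ is determined by the classes $\langle x_\alpha\rangle$ together with the weak equivalence class of the associated diagram $\alpha \mapsto F^\theta_{\alpha,\omega}(x_\alpha)$ in $\mathcal M_\omega$. For a fixed component it then remains to identify $B\Aut^h(x_\alpha, u^\theta_{\alpha,\beta})$ with the corresponding homotopy pullback of classifying spaces; since $B$ commutes with homotopy pullbacks \cite[19.4.5]{hirsch}, this reduces to identifying the simplicial monoid $\Aut^h(x_\alpha, u^\theta_{\alpha,\beta})$ — whose elements are families $(a_\alpha)$ of homotopy self-equivalences of the $x_\alpha$ making every $u^\theta_{\alpha,\beta}$-square commute — with the homotopy pullback of $\Aut^h$ of the diagram $\alpha \mapsto F^\theta_{\alpha,\omega}(x_\alpha)$ over $\prod_\alpha \Aut^h\!\big(F^\theta_{\alpha,\omega}(x_\alpha)\big)$ with $\prod_\alpha \Aut^h(x_\alpha)$. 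Exactly as in the proof of the previous theorem, this identification is made using Bousfield and Kan's description $\holim = \Hom(\mathcal D/-, -)$ together with the fact that $\mathcal D$ has a terminal object. The case of a disconnected $\mathcal D$ is deduced from the connected case, since $L_C$, the cotensors appearing here, and the products all commute with finite products.

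The step I expect to be the main obstacle is this last identification — of the automorphism simplicial monoid with the stated homotopy pullback — and, as in the fiber product case, the accompanying justification that the entire comparison can be carried out at the level of homotopy types rather than with the actual complete Segal spaces produced by $L_C$. The terminal object $\omega$ is precisely what makes the identification possible, as it allows the full coherence data of a lax homotopy limit object to be repackaged as the object-indexed data $(x_\alpha)$ together with the single $\mathcal M_\omega$-valued diagram $\alpha \mapsto F^\theta_{\alpha,\omega}(x_\alpha)$.
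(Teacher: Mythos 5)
Your argument is essentially sound, but it is not the route the paper takes. The paper's proof of this theorem does not invoke Theorem \ref{baut} at all: following \cite[5.4]{fiberprod}, it observes that each corner of the square is a cotensor $\Map(K^t, L_C \mathcal M)$ for an appropriate simplicial set $K$ (either $\Delta[0]$ or $\nerve(\mathcal D)$), that the vertical map is restriction along the inclusions $i_\alpha \colon \Delta[0]^t \rightarrow \nerve(\mathcal D)^t$ while the horizontal map is $\prod_\alpha \Map(\Delta[0]^t, F^\theta_{\alpha,\omega})$, and that the pullback of this diagram of mapping spaces encodes exactly the data $(x_\alpha, u^\theta_{\alpha,\beta})$ of the lax homotopy limit --- the terminal object being what allows the individual squares, one for each morphism of $\mathcal D$, to be assembled into a single cospan. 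In effect the paper identifies the pullback with $L_C \mathcal L_\mathcal D X$ directly, rather than producing a comparison map and verifying it is a levelwise weak equivalence. Your approach --- rerunning the $\coprod B\Aut^h$ analysis from the proof of the preceding theorem --- should also work, but it requires two extra inputs that the paper's route avoids: you must first justify identifying the level-$0$ homotopy type of the cotensor $L_C(\mathcal M_\omega)^{\nerve(\mathcal D)^t}$ with that of $L_C(\mathcal M_\omega^{\mathcal D})$ (established in \cite{fiberprod} only for $\mathcal D = [1]$, so general $\mathcal D$ needs an argument), and you must treat with care the claim that $B$ commutes with homotopy pullbacks, which for grouplike simplicial monoids holds only component by component --- precisely the $\pi_0$ bookkeeping you defer to a separate step. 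What your route buys is a clean separation of the construction of the comparison map from its verification, and it makes explicit where the weak-equivalence-class and automorphism data live; what the paper's route buys is brevity and the fact that it never has to pass from actual simplicial spaces to their homotopy types.
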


\begin{proof}
We would like to apply the strategy used in the proof of the analogous result for pullback diagrams \cite[5.4]{fiberprod}.  Let $I$ denote the nerve of the category $(\cdotp \rightarrow \cdotp)$.  For every morphism $d_{\alpha \beta}^\theta$ in $\mathcal D$, we get a diagram
\[ \xymatrix{ & \Map(I^t, L_C \mathcal M_\beta) \ar[d] \\
L_C \mathcal M_\alpha= \Map(\Delta [0]^t, L_C \mathcal M_\alpha) \ar[r] & \Map(\Delta[0]^t, L_C \mathcal M_\beta)= L_C \mathcal M_\beta} \] where the horizontal map is induced by $F_{\alpha \beta}^\theta$ and the vertical map is induced by the inclusion of the terminal object $\Delta[0] \rightarrow I$.
However, assembling all such diagrams together requires a terminal object.  When such a terminal object $\omega$ exists in $\mathcal D$, we get the pullback of the diagram
\[ \xymatrix{ & \Map((\nerve(\mathcal D))^t, L_C \mathcal M_\omega) \ar[d] \\
\prod_{\alpha \in \ob(\mathcal D)} \Map(\Delta [0]^t, L_C \mathcal M_\alpha) \ar[r] & \prod_{\alpha \in \ob(\mathcal D)} \Map(\Delta[0]^t, L_C \mathcal M_\omega).} \]  The horizontal map is given by
\[ \prod_{\alpha \in \ob(\mathcal D)} \Map(\Delta[0]^t, F^\theta_{\alpha \omega}, \] with $\theta$ the unique map $\alpha \rightarrow \omega$ in $\mathcal D$, and the vertical map is given by
\[ \prod_{\alpha \in \ob(\mathcal D)} \Map(i_\alpha, L_C \mathcal M_\omega) \] where $i_\alpha \colon \Delta[0]^t \rightarrow (\nerve(\mathcal D))^t$ is given by inclusion at $\alpha$.
Then the mapping spaces in the bottom row are equivalent to the ones given in the statement of the theorem.  The compatibility of the maps $F_{\alpha \omega}^\theta$ in the diagram $X$, together with the analogous compatibility in $\nerve(\mathcal D)$, guarantees the necessary compatibility in the definition of lax homotopy limit.
\end{proof}


\begin{thebibliography}{99}
\bibitem{barwick}
Clark Barwick, On left and right model categories and left and right Bousfield localizations, \emph{Homology, Homotopy Appl.\ } 12 (2010), no.\ 2, 245-320.

\bibitem{css}
J.E.\ Bergner, Complete Segal spaces arising from simplicial
categories, \emph{Trans.\ Amer.\ Math.\ Soc.\ } 361 (2009), 525-546.

\bibitem{fiberprod}
J.E.\ Bergner, Homotopy fiber products of homotopy theories, to appear in \emph{Israel J.\ Math.\ }, preprint available at math.AT/0811.3175.

\bibitem{simpcat}
J.E.\ Bergner, A model category structure on the category of
simplicial categories, \emph{Trans.\ Amer.\ Math.\ Soc.\ } 359 (2007),
2043-2058.

\bibitem{thesis}
J.E.\ Bergner, Three models for the homotopy theory of homotopy
theories, \emph{Topology} 46 (2007), 397-436.

\bibitem{bk}
A.K.\ Bousfield and D.M.\ Kan, \emph{Homotopy Limits, Completions,
and Localizations, Lecture Notes in Math 304}, Springer-Verlag,
1972.

\bibitem{duggercomb}
Daniel Dugger, Combinatorial model categories have presentations. \emph{Adv.\ Math.\ } 164 (2001), no.\ 1, 177-201.

\bibitem{dugger}
Daniel Dugger, Universal homotopy theories, \emph{Adv.\ Math.\ } 164
(2001), no.\ 1, 144--176.

%

\bibitem{dkfncxes}
W.G.\ Dwyer and D.M.\ Kan, Function complexes in homotopical
algebra, \emph{Topology} 19 (1980), 427-440.

\bibitem{dksimploc}
W.G.\ Dwyer and D.M.\ Kan, Simplicial localizations of categories,
\emph{J.\ Pure Appl.\ Algebra}  17  (1980), no. 3, 267--284.

\bibitem{ds}
W.G.\ Dwyer and J.\ Spalinski, Homotopy theories and model
categories, in \emph{Handbook of Algebraic Topology},  Elsevier,
1995.

\bibitem{gj}
P.G.\ Goerss and J.F.\ Jardine,  \emph{Simplicial Homotopy Theory,
Progress in Math}, vol. 174, Birkhauser, 1999.

\bibitem{hirsch}
Philip S.\ Hirschhorn, \emph{Model Categories and Their
Localizations, Mathematical Surveys and Monographs 99}, AMS, 2003.

\bibitem{hovey}
Mark Hovey, \emph{Model Categories, Mathematical Surveys and
Monographs, 63}. American Mathematical Society 1999.

%

\bibitem{lurie}
Jacob Lurie, \emph{Higher topos theory. Annals of Mathematics Studies}, 170. Princeton University Press, Princeton, NJ, 2009.

\bibitem{macl}
Saunders Mac Lane, \emph{Categories for the Working Mathematician,
Second Edition, Graduate Texts in Mathematics 5}, Springer-Verlag,
1997.

\bibitem{may}
J.P.\ May, \emph{Simplicial Objects in Algebraic Topology},
University of Chicago Press, 1967.

\bibitem{quillen}
Daniel Quillen, \emph{Homotopical Algebra, Lecture Notes in Math
43}, Springer-Verlag, 1967.

\bibitem{reedy}
C.L.\ Reedy, Homotopy theory of model categories, unpublished
manuscript, available at http://www-math.mit.edu/\verb1~1psh.

\bibitem{rezk}
Charles Rezk, A model for the homotopy theory of homotopy theory,
\emph{Trans.\ Amer.\ Math.\ Soc.\ } 353(3) (2001), 973-1007.

\bibitem{rezktopos}
Charles Rezk, Toposes and homotopy toposes, available at http://www.math.uiuc.edu/\verb1~1rezk/homotopy-topos-sketch.pdf.


\bibitem{toendha}
Bertrand To\"{e}n, Derived Hall algebras, \emph{Duke Math.\ J.\ }
135, no.\ 3 (2006), 587-615.

\bibitem{tv}
Bertrand To\"{e}n and Gabriele Vezzosi, Homotopical algebraic
geometry. I. Topos theory, \emph{Adv.\ Math.\ } 193 (2005), no.\ 2,
257-372.

\end{thebibliography}
\end{document}